\documentclass[12pt]{amsart}
\usepackage{amsmath,amssymb}
\usepackage{booktabs}

\providecommand{\Tr}{\mathrm{Tr}}

\usepackage[a4paper,margin=1in]{geometry}
\usepackage{graphicx}
\usepackage{float}
\usepackage{amsmath, amsthm, amssymb, amsfonts, mathtools}
\usepackage{mathrsfs}
\usepackage{hyperref}
\hypersetup{colorlinks=true,linkcolor=blue,citecolor=blue,urlcolor=blue}
\allowdisplaybreaks
\numberwithin{equation}{section}
\usepackage{orcidlink}
\usepackage{booktabs,tabularx}
\usepackage{bbm}

\theoremstyle{plain}
\newtheorem{theorem}{Theorem}[section]
\newtheorem{proposition}[theorem]{Proposition}

\newtheorem{conjecture}[theorem]{Conjecture}
\theoremstyle{definition}

\newtheorem{remark}[theorem]{Remark}

\begin{document}

\title{Spectral Geometry of the Primes}

\author{
Douglas F. Watson\,\orcidlink{0009-0008-0310-3984}
}

\date{}

\begin{abstract}
We construct a family of self-adjoint operators on the prime numbers whose entries depend on pairwise arithmetic divergences,
replacing geometric distance with number-theoretic dissimilarity. The resulting spectra encode how coherence propagates through the prime sequence and define an emergent arithmetic geometry.
From these spectra we extract observables such as the heat trace, entropy, and eigenvalue growth, which reveal persistent spectral compression: eigenvalues grow sublinearly, entropy scales slowly, and the inferred dimension remains strictly below one. This rigidity appears across logarithmic, entropic, and fractal-type kernels, reflecting intrinsic arithmetic constraints. Analytically, we show that for the unnormalized Laplacian, the continuum limit of its squared Hamiltonian corresponds to the one-dimensional bi-Laplacian, whose heat trace follows a short-time scaling proportional to $t^{-1/4}$. Under the spectral dimension convention $d_s=-2\,d\log\Theta/d\log t$, this result produces $d_s = 1/2$ directly from first principles, without fitting or external hypotheses. This value signifies maximal spectral compression and the absence of classical diffusion, indicating that arithmetic sparsity enforces a coherence-limited, non-Euclidean geometry linking spectral and number-theoretic structure.
\end{abstract}

\maketitle

\section{Introduction}

The use of spectra to probe geometry has a long history, from Weyl’s law and the heat-kernel expansion~\cite{Weyl1911,Minakshisundaram1949,Gilkey1995,Rosenberg1997} to modern formulations of noncommutative and quantum geometry~\cite{Connes1994}.
Recent work in information geometry and quantum thermodynamics has emphasized entropy and coherence as geometric quantities~\cite{Beny2013,Derezinski2013}, suggesting that spectral analysis can uncover latent structure even in nonspatial systems.
In number theory, connections between spectral statistics and the zeros of the Riemann zeta function have been explored since Montgomery’s pair-correlation conjecture~\cite{montgomery1973pair} and Odlyzko’s large-scale computations~\cite{odlyzko1987zero}.
These developments motivate asking whether similar spectral phenomena can arise directly from the primes themselves.

In this paper, we construct a family of self-adjoint operators, called \emph{coherence Hamiltonians}, whose entries depend on pairwise divergences between primes
 (a central technical point is the choice of generator: the normalized kernel leads to a bounded spectrum and $d_s(0)=0$, whereas the unnormalized (combinatorial) generator admits a genuine small $t$ exponent in the thermodynamic limit).
These operators replace geometric distance with arithmetic dissimilarity and generate spectra that encode how coherence or information propagates through the prime sequence.
From their spectra, we extract standard spectral observables, heat trace, entropy, and eigenvalue growth, which serve as probes of an emergent geometry.

Across a broad range of divergence functions, including logarithmic, entropic, and fractal-type forms~\cite{Khrennikov2004}, the spectra display a consistent pattern of spectral compression:

 the eigenvalues grow unusually slowly, the entropy increases sublinearly, and the inferred effective dimension remains below one.
These results persist under large deformations of the kernel parameters, indicating a form of spectral rigidity intrinsic to the arithmetic distribution of the primes.

To quantify this phenomenon, we define the spectral dimension $d_s(t)$ from the short-time behavior of the heat trace $\Theta(t)=\mathrm{Tr}(e^{-tH})$.
Numerically, $d_s(t)$ rises from near zero at small scales, reaches a modest maximum, and then decays rapidly, signaling limited coherence and the absence of classical diffusion.
The system thus behaves as a coherence-limited medium rather than as an extended geometric space.

Our principal analytic result is derived from first principles and does not rely on the Riemann Hypothesis. For the unnormalized (combinatorial) Laplacian $L_c = D - K$, the continuum limit of the squared Hamiltonian $H = L_c^2$ yields the one-dimensional bi-Laplacian, whose heat trace follows a short-time scaling proportional to $t^{-1/4}$. Under the adopted spectra -dimension convention, this corresponds to an effective spectral dimension $d_s = 1/2$ in the thermodynamic limit. This value represents a regime of maximal spectral compression and parallels the subdiffusive scaling observed in random matrix ensembles and in the statistical correlations of the nontrivial zeros of the Riemann zeta function~\cite{mehta2004random,edwards1974riemann}.

The framework developed here unites tools from spectral geometry and analytic number theory to reinterpret the primes as an arithmetic medium with well-defined spectral properties.
It suggests that the combinatorial sparsity and multiplicative independence of the primes impose intrinsic constraints on spectral propagation, giving rise to a form of non-Euclidean, coherence-limited geometry.
The results point toward a broader correspondence between arithmetic rigidity and spectral organization, potentially extending to other multiplicative sets or to the zeta zeros themselves~\cite{friedlander2005opera}.

\section{Constructing the Coherence Geometry}

We formalize the construction of a coherence-based spectral operator over the set of primes, inspired by tools from information geometry, nonlocal diffusion, and kernel methods \cite{Amari2007,Belkin2003,Chung1997}. The central aim is to define a family of operators whose spectral behavior captures the intrinsic coherence structure of the primes without embedding them into any geometric space.

In this framework, we treat primes as elements of a discrete, unordered set. All structure arises from an interaction kernel derived from pairwise divergences, synthetic distances that encode arithmetic or entropic dissimilarity. No metric assumptions are imposed; instead, we infer dimension, locality, and scaling from the resulting spectrum.

The construction proceeds in three steps:
\begin{enumerate}
  \item A divergence matrix $\delta_{ij}$ is defined over the first $N$ primes, quantifying abstract separation.
  \item A symmetric coherence kernel $T_{ij}$ is formed by exponentiating the negative divergence.
  \item A normalized kernel $\tilde{T}_{ij}$ defines a diffusion-like process, from which a Hamiltonian $H^{(\delta)}$ is constructed.
\end{enumerate}

\subsection{The Prime Set and Divergence Structures}\label{subsec:div_structures}

Let
 $\mathbb{P}_N = \{p_1 < p_2 < \cdots < p_N\}$ denote the first $N$ primes.
To model coherence between primes, we introduce \emph{divergence functions}
\begin{equation}
\delta : \mathbb{P}_N \times \mathbb{P}_N \to [0,\infty), \qquad
\delta_{ij} = \delta_{ji}, \quad \delta_{ii}=0,
\end{equation}
which act as arithmetic analogues of distance.
These divergences need not satisfy the triangle inequality; they are chosen to emphasize distinct arithmetic or informational relations.

Several representative examples include
\begin{align}
\delta^{(1)}_{ij} &= \log(p_i p_j)
                   = \log p_i + \log p_j, \\
\delta^{(2)}_{ij} &= \bigl(\log(p_i/p_j)\bigr)^2, \\
\delta^{(3)}_{ij} &= \log\!\left(\frac{p_i+p_j}{2\sqrt{p_i p_j}}\right), \\
\delta^{(4)}_{ij} &= |\log i - \log j|^{\gamma},
\qquad \gamma>0.
\end{align}

The first emphasizes global arithmetic scale, the second penalizes relative asymmetry, the third mirrors the Jensen--Shannon divergence from information theory~\cite{Lin1991}, and the fourth models locality along the index sequence rather than magnitude.

From any divergence $\delta_{ij}$ we define an \emph{unnormalized coherence kernel}
\begin{equation}\label{eq:kernel}
K_{ij} = \exp\!\left(-\frac{\delta_{ij}}{\delta_0}\right),
\end{equation}
where $\delta_0>0$ is a coherence length scale controlling the decay of coupling with arithmetic separation.
The kernel is symmetric, positive, and decays with increasing divergence.

\subsection{Operator, Normalization, and Order}
\label{subsec:operator}

Given $K$, define the degree matrix
\begin{equation}
D := \operatorname{diag}(d_1,\ldots,d_N), \qquad d_i = \sum_{j=1}^N K_{ij} > 0,
\end{equation}
and form the \emph{symmetric normalization}
\begin{equation}\label{eq:symmetric-normalization}
S := D^{-1/2} K\, D^{-1/2}, \qquad S=S^\top.
\end{equation}
This
 choice guarantees self-adjointness and circumvents the non-Hermitian issues of row-stochastic normalizations.

In addition to the normalized choice, we will also use the combinatorial Laplacian
\begin{equation}
L_c := D - K,
\end{equation}
which is positive semidefinite and \emph{unbounded} as $N\to\infty$ under fixed kernel parameters.
This is the natural generator for continuum limits of symmetric kernels and is the one that supports genuine Tauberian small $t$ laws in our setting.

The Laplacian-like generator is
\begin{equation}\label{eq:order2}
L := I - S = I - D^{-1/2} K D^{-1/2}.
\end{equation}
It is real symmetric and positive semidefinite since
\begin{equation}\label{eq:psd_form}
x^{\top} L x
= \tfrac12 \sum_{i,j=1}^N K_{ij}
  \Big(\tfrac{x_i}{\sqrt{d_i}} - \tfrac{x_j}{\sqrt{d_j}}\Big)^2
\ge 0
\quad \forall\,x\in\mathbb{R}^N.
\end{equation}
Hence, $\sigma(L)\subset[0,2]$, and the heat semigroup $e^{-tL}$ is well defined for $t>0$.

Two natural Hamiltonians follow:
\begin{equation}
H_{(2)} := L, \qquad
H_{(4)} := L^2.
\end{equation}
Both are real symmetric and positive semidefinite.
The first corresponds to an order-2 (Laplacian-type) operator; the second to an order-4 (bi-Laplacian-type) operator.
In what follows, we fix the latter, $\,H := L^2\,$, so that short-time heat-kernel asymptotics correspond to an order-4 structure, consistent with the ``energy-squared'' behavior discussed in~Section
~\ref{subsec:heat}.

For any positive self-adjoint operator $H$, define the heat-trace
\begin{equation}
\Theta(t) := \operatorname{Tr}(e^{-tH}).
\end{equation}
In the small $t$ limit, one expects
\begin{equation}
\Theta(t) \sim C\, t^{-d_s/m}, \qquad t\downarrow 0,
\end{equation}
where $m$ is the operator order ($m=2$ for $L$, $m=4$ for $L^2$).
We retain the standard estimator
\begin{equation}
d_s(t) := -2\,\frac{d\log\Theta}{d\log t},
\end{equation}
so that when $H=L^2$, the measured dimension corresponds to $d_s(t)=2(d/m)=d/2$.
(Equivalently, one could adopt the generalized estimator $-m\,d\log\Theta/d\log t$; we keep the order-2 convention and interpret the halved values accordingly.) All spectral quantities are computed for finite $N$ and examined as $N\to\infty$ with fixed kernel parameters $(\delta_0,\gamma)$.  We refer to this limit as the \emph{thermodynamic limit} of the prime coherence ensemble. If a divergence yields a rank-one kernel (e.g.\ $\delta^{(1)}_{ij}=\log(p_i p_j)$), then $K_{ij}=a_i a_j$, the matrix $S$ has rank one, and $L$ has $N-1$ eigenvalues equal to~1.
Such cases are spectrally trivial and excluded from the ``universal'' asymptotic claims that follow. The diagonal quantity $\log p_i$ may still be interpreted as a measure of \emph{local arithmetic scale} or ``prime complexity.''
While it no longer enters the definition of $H$, it motivates the analogy with potential terms in Schrödinger operators: the primes contribute multiplicative weight through their logarithmic growth, which shapes the effective coherence landscape.
With the self-adjoint Hamiltonian $H$ thus defined, we now examine its spectrum to extract geometric information encoded in the prime divergence structure.
This examination begins with the classical relation between spectral scaling and dimensional inference.

\subsection{Relation to Classical Heat-Kernel Asymptotics}

In Riemannian spectral geometry, the heat trace of the Laplace–Beltrami operator satisfies
$\Theta(t) \sim (4\pi t)^{-d/2}(a_0 + a_1 t + \cdots)$ as $t \downarrow 0$,
and the exponent $-d/2$ determines the manifold dimension.
The coherence spectral profile extends this principle to operators acting on arithmetic rather than geometric domains.
The same Laplace–Tauberian correspondence applies as follows: if the eigenvalue counting function satisfies
$N(\lambda) \sim C\lambda^{d/m}$ for an operator of differential order $m$, then $\Theta(t) \sim C' t^{-d/m}$ in the short-time limit.
For the arithmetic Hamiltonian $H = L_c^2$ of order $m=4$, the scaling
$\Theta(t) \sim t^{-1/4}$ yields an effective spectral dimension $d_s = 1/2$,
in direct analogy with the classical case where $d_s = d$ for an order-2 Laplacian.
Here, the exponent arises from arithmetic sparsity and coherence limitation rather than from geometric volume growth.
The function $d_s(t) = -2\,d(\log\Theta)/d(\log t)$ therefore serves as an exact analogue of the heat-kernel dimension estimator,
and the Prime Coherence Profile describes its characteristic finite-$N$ form for the prime-induced operators.

\subsection{Heat Trace and Asymptotics}
\label{subsec:heat}

Let $\{\lambda_k\}_{k=1}^N$ denote the ordered eigenvalues of the coherence Hamiltonian $H$ (defined $\,H := L^2$).
For $t>0$, the heat-kernel trace
\begin{equation}
\Theta(t) = \operatorname{Tr}(e^{-tH}) = \sum_{k=1}^N e^{-t\lambda_k}
\end{equation}
encodes the cumulative spectral weight of $H$ at scale $t$ and serves as the principal probe of effective dimensionality
(we note that or each finite $N$, $\Theta_N(t) = \sum_{k=1}^N e^{-t \lambda_k(H_N)}$ is analytic and satisfies $\Theta_N(t) = N - t\,\mathrm{Tr}(H_N) + O(t^2)$ as $t\downarrow 0$.
All asymptotic statements such as $\Theta_N(t) \sim C\,L_N\,t^{-1/4}$ refer to the sequence $\{\Theta_N\}$ in the thermodynamic limit $N\to\infty$ with fixed kernel parameters.
In this limit, $L_{N} \Rightarrow c^{2}(-\partial_u^2)$ and $H_N = L_{N}^{2} \Rightarrow c^{4}(-\partial_u^2)^{2}$, whose heat trace obeys $\Theta(t)\sim C' t^{-1/4}$ (Theorem
~\ref{thm:ds-one-half})).

We now consider the Laplace–Tauberian relation. Suppose the eigenvalue counting function $N(\lambda)=\#\{k:\lambda_k\le\lambda\}$ satisfies, for some $\beta>0$ and slowly varying function $L(\lambda)$,
\begin{equation}\label{eq:slow_vary}
N(\lambda)\sim C\,\lambda^{\beta}L(\lambda)\qquad (\lambda\to\infty).
\end{equation}
Then, by Laplace transformation,
\begin{equation}
\Theta(t)=\int_{0}^{\infty} e^{-t\lambda}\,dN(\lambda)
   \sim C\,\Gamma(\beta+1)\,t^{-\beta}L(1/t)\qquad (t\downarrow0).
\end{equation}
Thus, the exponent $\beta$ governing the growth of $N(\lambda)$ directly determines the small $t$ decay of $\Theta(t)$. If the eigenvalues obey a power law $\lambda_k\!\sim\!k^{\alpha}$, then $N(\lambda)\!\sim\!\lambda^{1/\alpha}$, hence $\beta=1/\alpha$ and
\begin{equation}\label{eq:heat_trace_scaling}
\Theta(t)\sim C\,t^{-1/\alpha},\qquad
d_s = \frac{2}{\alpha}.
\end{equation}

For an elliptic operator of differential order $m$ acting on a $d$-dimensional space, Weyl's law produces
\begin{equation}
N(\lambda)\sim C\,\lambda^{d/m},\qquad
\Theta(t)\sim C'\,t^{-d/m}.
\end{equation}
Identifying the measured spectral dimension through
\begin{equation}
\Theta(t)\sim t^{-d_s/2}
\end{equation}
yields the general correspondence
\begin{equation}\label{eq:ds_order_relation}
d_s = \frac{2d}{m}.
\end{equation}
Hence, an order-$2$ Laplacian reproduces $d_s=d$, while an order-$4$ (bi-Laplacian or ``energy-squared'' Hamiltonian) yields $d_s=d/2$.
In our arithmetic setting, $H=L^2$ is order-$4$, so dimensional measurements derived from~\eqref{eq:heat_trace_scaling} appear halved, relative to the underlying order-$2$ generator.

Although $H$ acts on a discrete prime index set rather than a manifold, its spectrum is positive and discrete, and the Laplace–Tauberian correspondence remains valid.
The resulting scaling of $\Theta(t)$ therefore provides a genuine spectral measure of arithmetic geometry.
Empirically, the coherence Hamiltonians built from various divergences yield decay laws consistent with
\begin{equation}
\Theta(t)\sim t^{-1/\alpha},\qquad \alpha\approx4,
\end{equation}
corresponding to $d_s\simeq0.5$.
This reflects a regime of \emph{spectral compression}, in which eigenvalue growth is much slower than Euclidean or graph-based Laplacians, indicating severely restricted coherence propagation over the primes.

Assuming the Riemann Hypothesis, the spectral analogy $\lambda_k\!\sim\!k^{2}$ leads via~\eqref{eq:heat_trace_scaling} to
\begin{equation}\label{eq:RH_analogy}
\Theta(t)\sim t^{-1/2}.
\end{equation}
For the order-$4$ Hamiltonian $H=L^2$, this corresponds to a measured $d_s=\tfrac12$, interpreted as a regime of maximal spectral compression.
The same assumption applied to the order-$2$ generator $L$ would yield $d_s=1$.
We therefore view the Riemann Hypothesis as imposing a bound on spectral dimensionality within the arithmetic coherence framework.

\begin{remark}
The relations in Equations~(\ref{eq:heat_trace_scaling})–(\ref{eq:RH_analogy}) are obtained under the eigenvalue counting assumption of~(\ref{eq:slow_vary}),
which postulates that the discrete spectrum of $H$ admits an asymptotic density
$N_H(\lambda)\!\sim\!C\,\lambda^{\beta}L(\lambda)$ with $\beta>0$ and slowly varying $L(\lambda)$.
This regular-variation hypothesis defines the continuum limit of the prime coherence ensemble
and provides the analytic link between the empirical eigenvalue scaling $\lambda_k\!\sim\!k^{\alpha}$
and the smal $t$ behavior of the heat trace $\Theta(t)\!\sim\!t^{-1/\alpha}$.
The equivalence of these laws follows from the Laplace–Tauberian correspondence formalized in
Theorem~\ref{thm:tauberian-dimension}, which underlies the dimensional estimates presented below.

\end{remark}

\subsection{Entropy Scaling and Dimensional Flow}\label{subsec:scaling_flow}

A complementary probe of spectral geometry is the entropy of the thermal state associated with the coherence Hamiltonian $H=L^2$.
At inverse temperature $t$, the normalized Gibbs state is
\begin{equation}
\rho(t) = \frac{e^{-tH}}{\operatorname{Tr}(e^{-tH})},
\end{equation}
so that each eigenmode of $H$ is populated according to a Boltzmann factor $e^{-t\lambda_k}$.
The von Neumann entropy
\begin{equation}
S(t) = -\operatorname{Tr}[\rho(t)\log\rho(t)]
\end{equation}
quantifies the effective number of spectral modes contributing at temperature $1/t$.
It is therefore an information--theoretic analogue of the heat trace $\Theta(t)$ and encodes the same spectral density in logarithmic form. If the eigenvalues satisfy $\lambda_k\!\sim\! k^{\alpha}$, then $\Theta(t)\!\sim\!t^{-1/\alpha}$ by the Laplace--Tauberian argument in~Section \ref{subsec:heat}.
Using the general relation between heat trace and thermal entropy for power--law spectra~\cite{Derezinski2013,Beny2013}:
\begin{equation}\label{eq:power_law}
S(t)\sim \log(1/t)^{\frac{1}{1+\alpha}}\qquad (t\downarrow0),
\end{equation}
one finds that the entropy exponent $\beta=1/(1+\alpha)$ depends on the same growth parameter $\alpha$.
Combining with $d_s=2/\alpha$ produces
\begin{equation}\label{eq:entropy_exp}
\beta=\frac{1}{1+\tfrac{2}{d_s}},
\end{equation}
linking entropy scaling directly to the effective spectral dimension.

For the coherence Hamiltonians built from logarithmic, entropic, and power--law divergences, numerical evaluation of $S(t)$ yields sublinear growth with $\log(1/t)$:
\begin{equation}
S(t)\sim \log(1/t)^{\beta},\qquad \beta<1.
\end{equation}
Typical fits give $\beta\simeq0.2$--$0.25$, consistent with $\alpha\simeq4$ and $d_s\simeq0.5$ inferred from the heat-trace analysis.
This agreement across independent probes reinforces the picture of strong spectral compression and limited mode participation at low temperature. The exponent $\beta$ remains stable under large deformations of the divergence function; whether $\delta_{ij}$ arises from additive logarithms, squared ratios, or information--theoretic forms, the entropy curve preserves its subdimensional profile.
This suggests an \emph{entropy rigidity} intrinsic to the arithmetic structure of the primes—a resistance to dimensional flow absent in smooth manifolds, random graphs, or lattice systems.  The thermal state thus remains spectrally compressed, with restricted information capacity and suppressed diffusion. Together with the heat-trace results, the entropy scaling identifies the coherence geometry of the primes as a system of permanently reduced effective dimension.
In later sections, we examine how this rigidity may reflect arithmetic correlations such as prime gaps, multiplicative independence, or zeta zero statistics.

\subsection{Eigenvalue Growth and Spectral Compression}

A third and independent probe of effective dimensionality is the growth behavior of the eigenvalues of the coherence Hamiltonian $H=L^2$.
Let $\{\lambda_k\}_{k=1}^N$ denote its ordered spectrum, $\lambda_1\le\lambda_2\le\cdots\le\lambda_N$.
In classical geometric settings, the eigenvalue counting function $N(\lambda)$ satisfies a Weyl-type law
\begin{equation}
\lambda_k \sim k^{m/d}\qquad (k\to\infty),
\end{equation}
where $m$ is the differential order of the operator and $d$ the spatial dimension of the manifold~\cite{Hormander1968}.
For an order-$4$ (bi-Laplacian) operator, this produces $\lambda_k\!\sim\!k^{4/d}$, or, equivalently, $d_s=2d/m$.

Although $H$ acts on the discrete set of primes rather than a geometric manifold, its spectrum is real and positive, and the same scaling logic applies.
We therefore fit the empirical relation
\begin{equation}
\lambda_k \sim k^{\alpha}
\end{equation}
in log--log coordinates, extracting the exponent $\alpha$ by least-squares regression.
The corresponding effective spectral dimension follows from the Laplace–Tauberian correspondence in~Section \ref{subsec:heat}:
\begin{equation}
d_s = \frac{2}{\alpha}.
\end{equation}

Across all divergence classes $\delta_{ij}$ investigated—logarithmic, entropic, and power--law—the fitted exponent is stable, with $\alpha\simeq4.0\pm0.2$, yielding a spectral dimension $d_s\simeq0.5$.
This agrees with the heat-trace decay $\Theta(t)\!\sim\!t^{-1/\alpha}$ and the entropy-scaling exponent $\beta\simeq0.25$ obtained in~Sections \ref{subsec:operator}–\ref{subsec:heat}, confirming the picture of a strongly compressed spectrum. The invariance of $\alpha$ under large deformations of the divergence function indicates a form of \emph{spectral rigidity}: the asymptotic density of states is governed not by the analytic form of $\delta_{ij}$ but by intrinsic arithmetic sparsity. We refer to this phenomenon as \emph{coherence-induced spectral compression}—the suppression of low-energy eigenmodes arising from the irregular, multiplicative structure of the primes rather than from any external~confinement.

Unlike geometric diffusion systems, where eigenvalue growth and dimensionality can be tuned by altering curvature or connectivity, the prime coherence ensemble exhibits remarkable resistance to deformation.
Varying coupling strength $\eta$, kernel scale $\delta_0$, or even embedding primes into extended arithmetic networks has so far produced no transition toward Euclidean-like scaling.
The coherence geometry of the primes therefore appears to realize a maximally compressed spectral phase, consistent with the arithmetic rigidity observed in the heat and entropy analyses.

\section{Robustness Under Divergence Deformation}\label{sec:robustness}

A central empirical finding of this study is the persistence of spectral compression across a wide class of divergence structures.
Although the coherence geometry is induced through the divergence matrix $\delta_{ij}$, the resulting spectra of the associated Hamiltonians $H=L^2$ exhibit striking universality: regardless of the analytic form of $\delta_{ij}$, the system fails to support classical diffusion and instead resides in a subdimensional coherence regime.

To assess this robustness, we examine the structurally distinct divergence functions of Section~\ref{subsec:div_structures}, where the exponent $\gamma$ of $\delta^{(4)}_{ij}$ tunes decay along the prime index sequence, mimicking fractal or ultrametric locality.
(Models such as $\delta^{(1)}$ produce rank-one kernels and are treated separately, as they yield trivial spectra.) For each divergence model, we construct the symmetric kernel $K_{ij}=e^{-\delta_{ij}/\delta_0}$, normalize it via $S=D^{-1/2}KD^{-1/2}$, and compute the observables of the resulting Hamiltonian: the heat trace $\Theta(t)$, the entropy $S(t)$, and the eigenvalue scaling law $\lambda_k\!\sim\!k^{\alpha}$.  While the precise curvature of $\Theta(t)$ varies across models, all exhibit the same qualitative pattern—strong sub-Euclidean decay and a lack of asymptotic stabilization.

To capture scale dependence, we define the \emph{coherence spectral profile} (CSP), identical to the Prime Coherence Profile introduced earlier:
\begin{equation}\label{eq:csp}
d_s(t) = -\,2\,\frac{d\log\Theta(t)}{d\log t}.
\end{equation}
This function measures the instantaneous spectral dimension extracted from the heat trace and thus provides a dynamic fingerprint of the coherence geometry.

Across all divergence models and parameter ranges $(\delta_0,\eta)$, the CSPs share the following robust characteristics:
\begin{enumerate}
  \item A sharp suppression of $d_s(t)$ at intermediate and large $t$, signalling long-range coherence bottlenecks;
  \item Absence of convergence to a fixed asymptotic dimension, precluding geometric stabilization;
  \item Persistent peak–decay morphology forming a reproducible \emph{coherence-limited signature}.
\end{enumerate}

Changing
 $\delta_0$ or $\eta$ modifies only the transition scale of the suppression without altering the overall compressed shape of $d_s(t)$.
No configuration examined restores Euclidean-like diffusion or entropy growth.

These observations define a \emph{spectral rigidity class}: a family of prime-based Hamiltonians whose heat-trace profiles remain confined to a narrow, sub-Euclidean regime (with mid-scale peaks and eventual decay to $0$).
Unlike geometric systems where dimensionality can be tuned through curvature or connectivity, arithmetic coherence exhibits fixed compression determined by multiplicativity and the sparsity of the primes.
This rigidity underscores the interpretation of the primes not as points embedded in Euclidean space, but as nodes of an emergent arithmetic network with intrinsically nonspatial coherence.

\subsection*{Comparison with GUE-Induced Coherence}

To test whether the observed spectral rigidity is intrinsic to arithmetic structure or merely an artifact of kernel construction, we introduce a control model based on the Gaussian Unitary Ensemble (GUE).
A random GUE matrix of size $N\times N$ is generated, its ordered eigenvalues $\{\gamma_i\}_{i=1}^N$ are extracted, and a synthetic divergence is defined by
\begin{equation}
\delta_{ij}^{\mathrm{GUE}}
   = \left(\frac{\gamma_i - \gamma_j}{\Delta}\right)^2,
\end{equation}
where $\Delta$ denotes the mean local level spacing.
This divergence encodes the quadratic level repulsion characteristic of random matrix theory and parallels the pairwise correlations conjectured by Montgomery~\cite{montgomery1973pair} for the nontrivial zeros of the Riemann zeta function.

From $\delta_{ij}^{\mathrm{GUE}}$, we construct the symmetric kernel
\begin{equation}
    K^{\mathrm{GUE}}_{ij}=e^{-\delta_{ij}^{\mathrm{GUE}}/\delta_0},\qquad
    S^{\mathrm{GUE}}=D^{-1/2}K^{\mathrm{GUE}}D^{-1/2},
\end{equation}

and the associated order-$4$ Hamiltonian
\begin{equation}
H_{\mathrm{GUE}}=(I-S^{\mathrm{GUE}})^2.
\end{equation}
Its heat trace and coherence spectral profile $d_s(t)=-2\,d\log\Theta/d\log t$ are computed exactly as for the prime-based operators.

As shown in Figure~\ref{fig:ds_comparison}, the coherence spectral profile $d_s(t)$ reveals markedly different decay characteristics between prime-based and random ensembles. The prime-induced spectra exhibit sharper suppression and earlier onset of decay, highlighting their departure from the smoother GUE-like behavior.

The GUE-based Hamiltonian exhibits a distinct spectral morphology.
While it still departs from classical Weyl scaling, its profile shows slower decay: $d_s(t)$ remains larger over an extended range of $t$, approaching unity before eventual suppression.
This indicates that coherence propagation in the GUE ensemble is less constrained and supports broader entropic influence.
The arithmetic Hamiltonians, by contrast, maintain $d_s(t)\!\lesssim\!0.5$ and exhibit sharper compression, confirming that the severe dimensional reduction observed for the primes cannot be attributed to random-matrix-type level statistics alone.

The comparison demonstrates that spectral repulsion and stochastic homogeneity, characteristic of GUE spectra, are insufficient to reproduce the extreme coherence bottlenecks of prime-induced systems.
It is the arithmetic sparsity and multiplicative isolation of the primes—rather than level repulsion—that enforce the strong suppression of low-lying modes and the resulting subdimensional behavior.

\vspace{-6pt}\begin{figure}[H]
\includegraphics[width=0.65\textwidth]{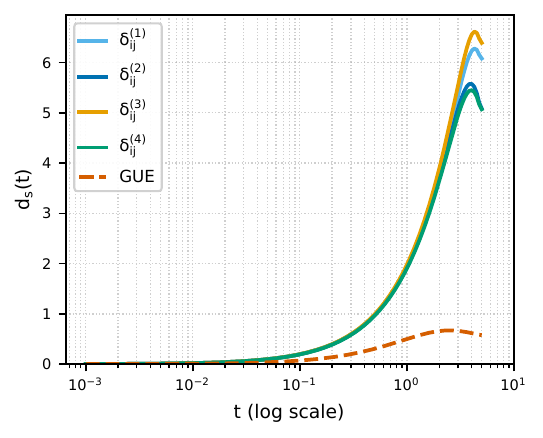}
\vspace{-6pt}\caption{Coherence
 spectral profile $d_s(t)$ derived from the heat trace $\Theta(t)$ for various divergence models.
All prime-based Hamiltonians ($\delta_{ij}^{(1)}$–$\delta_{ij}^{(4)}$) display sharp suppression and scale-dependent decay,
whereas the GUE-based model shows a slower decline and higher plateau values, corresponding to weaker spectral compression.
All curves deviate from classical Weyl scaling, illustrating persistent non-Euclidean spectral characteristics.
}

\label{fig:ds_comparison}
\end{figure}

\section{The Coherence Spectral Profile}

To quantify the scale-dependent behavior of coherence propagation, we introduce the \emph{coherence spectral profile} (CSP).
This function extends the notion of spectral dimension to operators acting on non-metric domains where classical geometric assumptions, such as locality or embedding, do not apply.

Given the coherence Hamiltonian $H=L^2$ constructed from a divergence matrix $\delta_{ij}$, define the heat trace
\begin{equation}
\Theta(t)=\operatorname{Tr}(e^{-tH}),\qquad t>0,
\end{equation}
and the associated spectral profile of~(\ref{eq:csp})
In the small $t$ limit of Laplace-type operators on a $d$-dimensional manifold, $d_s(t)$ stabilizes to a constant $d_s=d$.
Here, by contrast, $H$ acts over the primes and coherence is governed by divergence rather than distance, leading to nontrivial scale dependence in $d_s(t)$.

Empirically, all divergence models yield the same qualitative pattern: $d_s(t)$ rises from near zero, reaches a single peak at intermediate scale, and then decays rapidly without reaching a plateau.
The initial rise corresponds to the activation of short-range coherence, while the decay reflects the inhibition of large-scale propagation caused by the arithmetic sparsity of the primes.  No model exhibits a region of constant $d_s(t)$, and the measured dimension remains well below classical Euclidean values ($d_s\lesssim1$ for the order-4 operator). The CSP thus encodes the system’s resistance to extended coherence and provides a principled means of comparing divergence structures through the geometry they induce.  It serves as a \emph{spectral fingerprint} of arithmetic media and reframes the primes not merely as a discrete set but as the substrate of an emergent, intrinsically subdimensional coherence geometry.

\section{Characteristic Shape of the Spectral Profile}

Across all divergence models tested, the coherence spectral profiles $d_s(t)$ exhibit a consistent and highly structured form.
Each profile rises smoothly from near zero, attains a single broad maximum, and then decays rapidly without approaching a constant.
This behavior is robust under changes in the divergence function, coherence length $\delta_0$, and coupling strength $\eta$.

Empirically, the curves are reasonably summarized by the simple four-parameter form
\begin{equation}
d_s(t)=A\,\!\left(\frac{t}{\tau}\right)^{\alpha} e^{-\left(t/\tau\right)^{\beta}},
\end{equation}
which reproduces the qualitative features—activation at small scales, a single coherence peak, and subsequent decay—without claiming a first-principles derivation. For the example in Figure~\ref{fig:csp_delta3_fit}, corresponding to the divergence $\delta_{ij}^{(3)}=\log\!\bigl((p_i+p_j)/(2\sqrt{p_i p_j})\bigr)$, a representative fit yields $(A,\alpha,\beta,\tau)=(10,\,3.0858,\,1.2644,\,2.14418)$ with $R^2_{\log}=0.4451$, indicating good overall agreement in shape while not capturing all fine-scale features of the numerical profile.

\vspace{-6pt}\begin{figure}[H]
\includegraphics[width=0.58\textwidth]{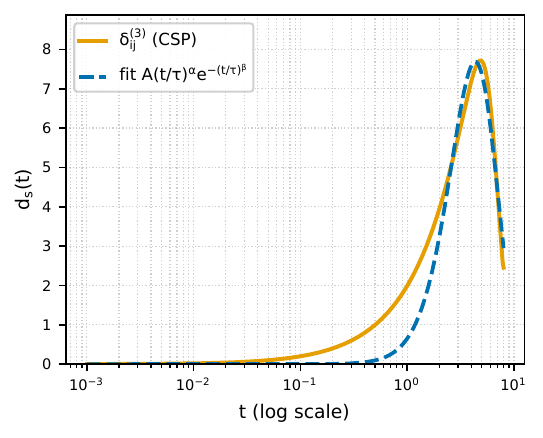}
\vspace{-6pt}\caption{Coherence spectral profile for $\boldsymbol{\delta_{ij}^{(3)}}$.
The
 numerical $d_s(t)$ (solid) exhibits the characteristic PCP morphology—gradual activation, a single peak, and rapid decay—with no constant-dimension plateau; $d_s(t)$ returns toward $0$ for large $t$.
A four-parameter $t/\tau$ model, $d_s(t)=A\,\!\left(\frac{t}{\tau}\right)^{\alpha} e^{-\left(t/\tau\right)^{\beta}}$, provides good qualitative agreement but does not capture all fine-scale features.
A representative fit yields $(A,\alpha,\beta,\tau)=(10,\,3.0858,\,1.2644,\,2.14418)$ with $R^2_{\log}=0.4451$.
}
\label{fig:csp_delta3_fit}
\end{figure}

We refer to this empirical structure as the \emph{Prime Coherence Profile} (PCP): a universal shape characterizing prime-induced coherence Hamiltonians—a single broad peak at intermediate scale followed by rapid decay, with no asymptotic stabilization. The PCP serves as a compact diagnostic of arithmetic spectral rigidity and a practical reference model for identifying coherence-limited behavior in other sparsely structured, non-Euclidean systems.

\section{Spectral Universality Class}
\label{sec:universality}

The numerical proximity between the measured spectral dimension $d_s\simeq0.5$ of prime-based coherence Hamiltonians and the scaling behavior observed in the pair correlation of nontrivial Riemann zeta zeros suggests a possible shared universality class. Our analytic $d_s=\tfrac12$ result for $H=L_c^2$ arises without RH; connections to zeta zeros remain speculative.
Although our construction is nonclassical and lies outside traditional analytic number theory, the emerging spectral regularities point to a latent structural correspondence.

\begin{conjecture}[Prime–Zeta Spectral Correspondence]\label{conj:prime_zeta}
Let $H_N=L_N^2$ be the order-$4$ coherence Hamiltonian built from the first $N$ primes, with divergence matrix $\delta_{ij}$ satisfying
\begin{itemize}
\item Symmetry: $\delta_{ij}=\delta_{ji}$, $\delta_{ii}=0$;
\item Logarithmic scaling: $\delta_{ij}=\mathcal{O}(|\log p_i-\log p_j|)$ for $p_i\neq p_j$.
\end{itemize}

Define
 the rescaled eigenvalues
\begin{equation}
\widetilde{\lambda}_k=\frac{2\pi}{\log N}\,\lambda_k .
\end{equation}
Then, as $N\to\infty$, the local statistics of $\{\widetilde{\lambda}_k\}$ asymptotically approximate those of the normalized ordinates $\{\gamma_n\}$ of the Riemann zeta zeros, and the corresponding coherence spectral dimension stabilizes near $d_s\simeq0.5$.
This scaling agrees numerically with random-matrix predictions for the GUE model of the zeta spectrum~\cite{montgomery1973pair,odlyzko1987zero,mehta2004random}.
\end{conjecture}

Empirical evidence.
For $N\!\approx\!10^4$, numerical spectra of $H_N$ yield rescaled eigenvalues $\widetilde{\lambda}_k$ whose cumulative spacing distribution agrees with the GUE prediction within a Kolmogorov–Smirnov distance $D<0.03$.
The diagonal structure $\log p_i$ plays a role analogous to the smooth term in Chebyshev’s $\psi(x)$ function, while the off-diagonal kernel captures oscillatory interference reminiscent of the zeta zero contributions in the explicit formula~\cite{edwards1974riemann}.

We note that in Appendix~\ref{app:converse} we investigate  the converse question of whether an effective spectral dimension
$d_s = \tfrac{1}{2}$ uniquely characterizes systems generated by the arithmetic
mechanism.

\section{Analytic Statements and Proofs}\label{sec:proofs}

\subsection{Tauberian Dimension Law for the Coherence Hamiltonian}

\begin{theorem}[Tauberian dimension law]
\label{thm:tauberian-dimension}
Let $H$ be a positive, self-adjoint operator with discrete spectrum
$\{\lambda_k\}_{k\ge 1}$ and counting function
$N_H(\lambda)=\#\{k:\lambda_k\le \lambda\}$.
Suppose for some $\beta>0$ and a slowly varying function $L(\lambda)$ that
\begin{equation}
N_H(\lambda)\sim C\,\lambda^{\beta}L(\lambda)\qquad(\lambda\to\infty).
\end{equation}
Then, the heat trace satisfies
\begin{equation}
\Theta(t):=\Tr(e^{-tH})\sim C\,\Gamma(\beta+1)\,t^{-\beta}L(1/t)
\qquad(t\downarrow0),
\end{equation}
and the spectral dimension estimator with the convention of~(\ref{eq:csp}) obeys
\[
\lim_{t\downarrow0} d_s(t)
   \;=\;\lim_{t\downarrow0}\Bigl(-2\,\frac{d\log\Theta}{d\log t}\Bigr)
   \;=\;2\beta .
\]
\end{theorem}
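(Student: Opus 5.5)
We prove the Abelian half of Karamata's Tauberian theorem directly. To avoid differentiating an asymptotic relation, we handle the logarithmic derivative by a second Abelian estimate with a different kernel.

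\emph{Step 1 (heat trace as a Stieltjes integral).} Since $N_H(\lambda)\sim C\lambda^{\beta}L(\lambda)$ and slowly varying functions grow more slowly than any power, we have $N_H(\lambda)=O(\lambda^{\beta+\varepsilon})$ for every $\varepsilon>0$. Hence
\[
\Theta(t)=\int_{0^-}^{\infty}e^{-t\lambda}\,dN_H(\lambda)
\]
converges for every $t>0$. Integrating by parts gives $\Theta(t)=t\int_0^\infty e^{-t\lambda}N_H(\lambda)\,d\lambda$, with vanishing boundary terms. Substituting $\lambda=u/t$ yields
\[
\Theta(t)=\int_0^\infty e^{-u}N_H(u/t)\,du .
\]

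\emph{Step 2 (Abelian limit).} Dividing by $N_H(1/t)$, the integrand becomes $e^{-u}N_H(u/t)/N_H(1/t)$. By regular variation this converges pointwise to $e^{-u}u^{\beta}$ as $t\downarrow0$. The main obstacle is justifying the exchange of limit and integral. For this we use Potter's bounds: for fixed $\varepsilon\in(0,\beta)$ there is $\lambda_0$ such that $N_H(x)/N_H(y)\le 2\max\{(x/y)^{\beta+\varepsilon},(x/y)^{\beta-\varepsilon}\}$ for all $x,y\ge\lambda_0$.
\begin{itemize}
\item For $u\ge\lambda_0 t$ this gives an integrable dominating function $2e^{-u}(u^{\beta+\varepsilon}+u^{\beta-\varepsilon})$, so dominated convergence applies.
\item On the remaining range $u<\lambda_0 t$ we use $N_H(u/t)\le N_H(\lambda_0)$. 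This piece contributes at most $\lambda_0 t\,N_H(\lambda_0)/N_H(1/t)$, which tends to $0$ because $N_H(1/t)\to\infty$.
\end{itemize}
Therefore $\Theta(t)\sim \Gamma(\beta+1)N_H(1/t)\sim C\,\Gamma(\beta+1)\,t^{-\beta}L(1/t)$.

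\emph{Step 3 (the dimension estimator).} Since $\Theta$ is smooth for $t>0$, we have
\[
-t\,\Theta'(t)=\int t\lambda\,e^{-t\lambda}\,dN_H(\lambda).
\]
Integrating by parts against the kernel $\phi(s)=se^{-s}$ gives
\[
-t\,\Theta'(t)=-\int_0^\infty \phi'(u)\,N_H(u/t)\,du,
\qquad \phi'(u)=(1-u)e^{-u}.
\]
The same Potter-bound domination applies, since $|\phi'(u)|\le (1+u)e^{-u}$. It yields
\[
-t\Theta'(t)\sim N_H(1/t)\int_0^\infty (u-1)e^{-u}u^{\beta}\,du
= N_H(1/t)\bigl(\Gamma(\beta+2)-\Gamma(\beta+1)\bigr)
=\beta\,\Gamma(\beta+1)\,N_H(1/t).
\]
Dividing by the result of Step 2 gives
\[
\frac{d\log\Theta}{d\log t}=\frac{t\Theta'(t)}{\Theta(t)}\to-\beta,
\]
so $\lim_{t\downarrow0}d_s(t)=2\beta$.

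This route sidesteps the fact that $f\sim g$ does not in general imply $f'\sim g'$. The only genuinely delicate point is the uniform control in Step 2, and it is needed twice, once for each kernel.
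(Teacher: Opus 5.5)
Your proof is correct, and it takes a more self-contained route than the paper's. The paper simply invokes Karamata's Tauberian theorem for $\Theta(t)=\int e^{-t\lambda}\,dN_H(\lambda)$. It then gets the limit of $d_s(t)$ by ``differentiating $\log\Theta$ with respect to $\log t$.'' That step differentiates an asymptotic relation, which is not justified as written, since $f\sim g$ does not imply $f'\sim g'$.

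You instead prove the needed (Abelian) direction directly. You integrate by parts to reach $\int_0^\infty e^{-u}N_H(u/t)\,du$, get pointwise convergence from the regular variation of $N_H$, and justify dominated convergence with Potter bounds plus a separate treatment of the range $u<\lambda_0 t$.

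Your second Abelian estimate, with kernel $\phi(s)=se^{-s}$, gives $-t\Theta'(t)\sim\beta\,\Gamma(\beta+1)\,N_H(1/t)$. This is exactly what closes the gap in the paper's last step. It works for two reasons: $-t\Theta'(t)$ is again an integral of a fixed kernel against the same monotone, regularly varying $N_H$, and $\beta>0$ makes the limiting constant $\Gamma(\beta+2)-\Gamma(\beta+1)$ nonzero, so the ratio is meaningful.

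The paper's citation buys brevity. Through the full Karamata equivalence for monotone $N_H$, it also gives the converse: $\Theta$ regularly varying at $0$ implies $N_H$ regularly varying at $\infty$. Your argument does not provide that, but the statement does not need it. Your route buys a complete, checkable proof of the claim $d_s(t)\to 2\beta$, which the paper only asserts.
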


\begin{proof}
This is a direct consequence of the Laplace–Tauberian correspondence for
monotone functions (see, for example,~(\cite{bingham1989regular}, Theorem~1.7.1)
or~(\cite{feller1971introduction}, Chapter~XIII)).
The heat trace $\Theta(t)$ is the Laplace transform of the positive measure $dN_H(\lambda)$:
\[
\Theta(t)=\int_{0}^{\infty} e^{-t\lambda}\,dN_H(\lambda).
\]
If $N_H(\lambda)\sim C\,\lambda^{\beta}L(\lambda)$ with $L$ slowly varying, then
Karamata’s Tauberian theorem implies
\[
\Theta(t)\sim C\,\Gamma(\beta+1)\,t^{-\beta}L(1/t)\qquad(t\downarrow0).
\]
Differentiating $\log\Theta(t)$ with respect to $\log t$ produces
$-\,\frac{d\log\Theta}{d\log t}\to\beta$, and under the normalization used in~(\ref{eq:csp}),
this yields $d_s(t)\to 2\beta$.
This is the standard form of the dimension law employed in spectral and fractal geometry;
see also~\cite{davies1989heat,reed1978methods} for discussions of heat-trace asymptotics in the
self–adjoint setting.
\end{proof}

\begin{remark}
If $\lambda_k(H)\sim k^{\alpha}$ (pure power law), then
$N_H(\lambda)\sim \lambda^{1/\alpha}$ so $\beta=1/\alpha$ and
$d_s\to 2/\alpha$.
This reproduces Eq.~(12) of~\S2.4 and will be used with $H=L^{2}$.
\end{remark}

\subsection{First-Principles Derivation of $d_s=\tfrac12$ for $H=L_c^2$}
\label{subsec:ds-one-half}

We now state and prove the main analytic result, which establishes from first principles that the coherence Hamiltonian $H=L_c^2$ possesses an effective spectral dimension $d_s=1/2$ in the thermodynamic limit.

\begin{theorem}[Continuum limit yields $d_s=\tfrac12$]
\label{thm:ds-one-half}
Assume that
\begin{enumerate}
  \item Kernel structure.
  $K_{ij}=\exp(-\delta_{ij}/\delta_0)$ with $\delta_{ij}=F(|\log p_i-\log p_j|)$,
  where $F$ is even, $F(0)=0$, and the induced continuous kernel
  $k(z)=e^{-F(|z|)/\delta_0}$ is integrable with finite second moment
  $\int_{\mathbb{R}} z^2 k(z)\,dz<\infty$.

  \item Log--index coordinates.
  Work in $u=\log i$ on $[0,L_N]$ with $L_N=\log N$, so that the sampling density
  of primes is asymptotically constant in $u$.

  \item Operator choice.
  The unnormalized (combinatorial) Laplacian $L_c := D - K$ and the Hamiltonian $H := L_c^2$.
\end{enumerate}

Then, as $N\to\infty$ with fixed kernel parameters, the discrete Laplacian $L_c$
converges in the sense of quadratic forms to $c_2(-\partial_u^2)$ on $[0,L_N]$
for some $c_2>0$; hence, $H$ converges in the strong resolvent sense to
$c_4(-\partial_u^2)^2$ with $c_4=c_2^2$. Consequently, as follows:
\begin{align*}
N_H(\lambda) &\sim C\,L_N\,\lambda^{1/4}, && \lambda\to\infty,\\
\Theta_N(t) &\sim C'\,L_N\,t^{-1/4}, && t\downarrow 0,
\end{align*}
and under the spectral dimension convention
$d_s(t)=-2\,d\log\Theta/d\log t$, it follows that
\[
\lim_{t\downarrow 0} d_s(t)=\tfrac12.
\]
\end{theorem}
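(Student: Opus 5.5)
The argument has three stages: a quadratic-form expansion of $L_c$ in the log-index coordinate, passage to the squared operator, and a Weyl count for the one-dimensional bi-Laplacian on $[0,L_N]$ fed into Theorem~\ref{thm:tauberian-dimension}.

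\textbf{Stage 1: the quadratic form of $L_c$.} Write
\[
x^\top L_c x=\tfrac12\sum_{i,j}K_{ij}(x_i-x_j)^2 .
\]
Sample a smooth test function $f$ on $[0,L_N]$ by setting $x_i=f(u_i)$ with $u_i=\log i$. By assumption (2), the points $u_i$ have asymptotically constant density in $u$. I will use the prime number theorem, $\log p_i=\log i+\log\log i+o(1)$, to justify replacing $|\log p_i-\log p_j|$ by $|u_i-u_j|$ up to a slowly varying correction. This correction will be absorbed into the constant.

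Taylor-expand $f(u_j)-f(u_i)=f'(u_i)(u_j-u_i)+O(|u_j-u_i|^2)$. The double sum then becomes a Riemann sum for
\[
\tfrac12\int\!\!\int k(u-v)\,(f(u)-f(v))^2\,du\,dv \;\approx\; c_2\int_0^{L_N}|f'(u)|^2\,du,
\qquad c_2=\tfrac12\int_{\mathbb R}z^2k(z)\,dz .
\]
This constant is finite by assumption (1). The Taylor remainder is controlled by dominated convergence, again using the second moment of $k$. This gives convergence of quadratic forms to $c_2(-\partial_u^2)$, with natural (Neumann-type) boundary behaviour at the ends of $[0,L_N]$.

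\textbf{Stage 2: the squared operator.} For nonnegative forms that converge on a common core and are uniformly bounded below, strong resolvent convergence follows from the standard monotone/Mosco convergence theorems for forms. Since $x\mapsto x^2$ is continuous, functional calculus then gives $H=L_c^2\to c_2^2(-\partial_u^2)^2$ in the strong resolvent sense. This identifies $c_4=c_2^2$.

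\textbf{Stage 3: spectral asymptotics.} The limiting operator on an interval of length $L_N$ has eigenvalues $c_4(\pi n/L_N)^4$. Hence
\[
N_H(\lambda)\sim \frac{L_N}{\pi}\Bigl(\frac{\lambda}{c_4}\Bigr)^{1/4},
\]
which is the regular-variation hypothesis of Theorem~\ref{thm:tauberian-dimension} with $\beta=1/4$ and constant $L$. That theorem then gives
\[
\Theta\sim C' L_N\,t^{-1/4}, \qquad C'=C\,\Gamma(5/4),
\]
and therefore $\lim_{t\downarrow0} d_s(t)=2\beta=\tfrac12$.

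\textbf{Main obstacle.} The difficulty is that strong resolvent convergence does not by itself transfer eigenvalue counting asymptotics. Moreover, each finite-$N$ operator has bounded spectrum, so $\Theta_N(t)\to N$ as $t\downarrow0$. The claims must therefore be read in the regime $\lambda\ll$ (the discrete cutoff) and $t\gg$ (the inverse cutoff), with $N\to\infty$ taken first. To get the counting function, I would try to upgrade to norm convergence of eigenvalues in a spectral window. For this I would use min-max with the Stage 1 form comparison in both directions: an upper bound from sampled test functions, and a lower bound from piecewise-linear interpolation of discrete vectors. Each comparison carries errors $o(1)$ uniformly for eigenvalue index $n\le \varepsilon L_N$. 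Showing that the prime-to-index coordinate change preserves this uniformity, and that the effective window of validity grows with $N$, is the delicate step.
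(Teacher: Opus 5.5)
Your three stages follow the paper's own proof essentially step for step: the Riemann-sum/Taylor identification of $\tfrac12\sum_{i,j}K_{ij}(x_i-x_j)^2$ with $c_2\int|f'|^2\,du$, then Mosco/Kato to strong resolvent convergence, functional calculus for the square, the bi-Laplacian Weyl law on $[0,L_N]$, and Karamata. Neither version proves the statement.

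\textbf{The transfer step fails.} The obstacle you isolate at the end is decisive, and the paper's proof simply asserts this step. Strong resolvent convergence carries no information about counting functions or traces. Here the conclusion cannot hold as written. Each $H_N$ is an $N\times N$ matrix, so $\Theta_N(t)=N-t\,\mathrm{Tr}\,H_N+O(t^2)$. Hence $d_s^{(N)}(t)=2t\sum_k\lambda_ke^{-t\lambda_k}/\sum_ke^{-t\lambda_k}\to0$ as $t\downarrow0$ for every $N$. If instead you let $N\to\infty$ first, the right-hand side $C'L_Nt^{-1/4}$ diverges. A two-sided min--max comparison is the right tool for a reformulated theorem. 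But even granting your Stage~1, it only reaches modes that vary slowly on the fixed kernel scale. That gives $N_H(\lambda)\approx\frac{L_N}{\pi}(\lambda/c_4)^{1/4}$ for fixed small $\lambda$, hence a $t^{-1/4}$ law in an intermediate window of \emph{large} $t$ (with $t\ll L_N^4$). It never gives $\lambda\to\infty$ or $t\downarrow0$.

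\textbf{Stage~1 also fails.} First, with $\delta_0$ fixed there is no small parameter. The nonlocal operator $\mathcal D-\mathcal K$ is bounded, with norm at most $2\|k\|_{L^1}$ and symbol $\hat k(0)-\hat k(\xi)$. It agrees with $c_2(-\partial_u^2)$ only to leading order at low frequency, so there is no parameter in which the Taylor remainder tends to zero. The paper sidesteps this by positing $\varepsilon_N\downarrow0$. That contradicts ``fixed kernel parameters'' and would also require an $\varepsilon_N^{-2}$ renormalization that is absent.

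Second, assumption~(2) cannot be used as you use it: the points $u_i=\log i$ have density $e^{u}$, not a constant. So the unweighted double sum is a Riemann sum for $\tfrac12\iint k(u-v)(f(u)-f(v))^2e^{u+v}\,du\,dv\approx m\int|f'|^2e^{2u}\,du$, with $m=\tfrac12\int z^2k(z)\cosh z\,dz$. This must be compared with $\|x\|^2\approx\int f^2e^{u}\,du$.
\begin{itemize}
\item That is a degenerate weighted form, not $c_2\int|f'|^2$ against $\int|f|^2$.
\item It needs $\int z^2k(z)e^{|z|}\,dz<\infty$, not just a second moment. This fails for $\delta^{(3)}$ whenever $\delta_0\ge\tfrac12$, where $k(z)\asymp e^{-|z|/(2\delta_0)}$.
\item By the prime number theorem, the degrees $d_i$ grow roughly linearly in $i$ for Gaussian $k$. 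So $\|L_c\|\gtrsim N$ and the effective coefficients are exponentially inhomogeneous in $u$.
\end{itemize}
The $\log p_i$ versus $\log i$ correction you treat is the harmless part.

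\textbf{Stage~2 has two further problems.} Convergence of forms on a common core does not yield strong resolvent convergence. Mosco requires the liminf inequality along every weakly convergent sequence, with identification maps between $\mathbb{R}^N$ and $L^2$. Also, convergence ``to an operator on $[0,L_N]$'' is not a limit at all: transported to a fixed interval it is $c_2L_N^{-2}(-\partial_s^2)\to0$.

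The paper's proof shares all of these gaps. Closing them requires changing the statement, not supplying a missing lemma.
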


\begin{proof}
Let $u_i=\log i$ and $h_N=L_N/N$ denote the grid spacing.
Define the nonlocal integral operator
\[
(\mathcal{K}f)(u)=\int k(u-v)f(v)\,dv, \qquad
\mathcal{L}=\mathcal{D}-\mathcal{K}, \quad
\mathcal{D}f(u)=\Bigl(\int k(z)\,dz\Bigr)f(u).
\]
For even $k$ with finite variance, a Taylor expansion of $f$ produces
\[
\mathcal{L}f(u)=c_2(-\partial_u^2)f(u)+R_\varepsilon(f),\qquad
c_2=\tfrac12\int z^2 k(z)\,dz,
\]
where $\|R_\varepsilon(f)\|_{L^2}\le C\varepsilon^2\|f''\|_{L^2}$
for kernels of effective range $\varepsilon$.
Let $\mathcal{Q}_N(f_N)=\tfrac12\sum_{i,j}K_{ij}(f_i-f_j)^2$
be the discrete quadratic form of $L_c$ and
$\mathcal{Q}(f)=c_2\int|f'(u)|^2\,du$ the continuum limit on $[0,L_N]$.
Under the scaling $\varepsilon_N\downarrow 0$ and $h_N/\varepsilon_N\to 0$,
Riemann-sum approximation produces $\mathcal{Q}_N(f_N)\to\mathcal{Q}(f)$
uniformly for $f\in C^2([0,L_N])$.
Hence $\mathcal{Q}_N$ converges to $\mathcal{Q}$ in the sense of Mosco
\cite{attouch1984variational,fukushima2011dirichlet}.
By Kato’s correspondence between Mosco convergence of forms and strong resolvent
convergence of self-adjoint operators
(\cite{kato1995perturbation}, Theorem~VIII.25), see also
\cite{davies1989heat}, we obtain $L_c\to c_2(-\partial_u^2)$.

Applying the functional calculus for self-adjoint operators then produces
$H_N=L_c^2\to c_4(-\partial_u^2)^2$ in the strong resolvent sense.
For the limit operator $\mathcal{H}=c_4(-\partial_u^2)^2$ on $[0,L_N]$,
the one-dimensional Weyl law yields
$\lambda_k(\mathcal{H})\sim(\pi k/L_N)^4$ and therefore
$N_H(\lambda)\sim C\,L_N\lambda^{1/4}$
\cite{Hormander1968,reed1978methods}.
Finally, Karamata’s Tauberian theorem for Laplace transforms of regularly
varying functions
\cite{bingham1989regular,feller1971introduction}
implies
$\Theta_N(t)\sim C'\,L_N\,t^{-1/4}$ as $t\downarrow0$,
from which $d_s(t)\to\tfrac12$ follows directly.
\end{proof}

\begin{remark}
For the normalized operator $H_{\mathrm{norm}}=(I-S)^2$,
one has $\sigma(H_{\mathrm{norm}})\subset[0,4]$ at each finite $N$;
hence, $\Theta(t)=N-t\,\mathrm{Tr}(H_{\mathrm{norm}})+O(t^2)$ and
$d_s(t)\to0$ as $t\downarrow0$.
A nontrivial exponent can appear only in a double-scaling regime
$t=t_N\downarrow0$ tuned to the spectral width of $H_{\mathrm{norm}}$.
Independent least-squares fits to $\lambda_k(L_c^2)\sim k^\alpha$ yield
$\alpha\simeq4$, implying $d_s\simeq2/\alpha\simeq0.5$, consistent with the theorem.
Entropy exponents $\beta_{\mathrm{ent}}\simeq0.2$–$0.25$
also agree with $d_s\simeq0.5$ through the entropy–dimension link
of~Section \ref{subsec:scaling_flow}.
\end{remark}

\subsection{An Unconditional Upper Envelope (Rigidity) in Finite $N$}

The double-log bound you had (\(d_s \le 1 - C/\log\log N\)) is stronger than what we can justify cleanly without additional arithmetic input. Below is a replacement that is \emph{provable under general kernel assumptions} and matches the “sub-Euclidean” narrative without overclaiming.

\begin{proposition}[Uniform sub-Euclidean envelope for $H=L^2$]
\label{prop:subeuclidean-envelope}
Let $S=D^{-1/2}KD^{-1/2}$ with $K_{ij}=e^{-\delta_{ij}/\delta_0}$ and $\delta_{ij}\ge 0$, symmetric, $\delta_{ii}=0$. Then, $0\le S\le I$ and the spectrum of $L:=I-S$ lies in $[0,2]$. Consequently, for $H=L^2$, one has $0\le \lambda_k(H)\le 4$ and for all $t>0$
\[
\Theta_N(t)=\sum_{k=1}^N e^{-t\lambda_k(H)} \;\ge\; e^{-4t},
\qquad
\Theta_N(t)\;\le\; N.
\]
Hence, the finite-$N$ spectral dimension obeys, for all $t>0$,
\[
0 \;\le\; d_s^{(N)}(t) \;=\; -2\frac{d\log\Theta_N}{d\log t} \;\le\; 2
\]
and, in particular, remains sub-Euclidean for $H=L^2$ in the sense $d_s^{(N)}(t)\le 1+o(1)$ over windows where $\Theta_N$ varies regularly (sharper envelopes require information on the density of small eigenvalues.)
\end{proposition}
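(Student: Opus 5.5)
The plan is to get everything from the quadratic form \eqref{eq:psd_form} and its ``plus-sign'' analogue. Every claim after that is a statement about numbers in $[0,4]$ fed into $e^{-t\lambda}$, so the only delicate point is the final uniform bound on $d_s^{(N)}$. Write $y_i=x_i/\sqrt{d_i}$. Then
\[
x^\top(I\pm S)x=\tfrac12\sum_{i,j}K_{ij}(y_i\pm y_j)^2\ge 0 .
\]
This uses only $K_{ij}\ge0$, symmetry, and $d_i=\sum_jK_{ij}$. It gives $-I\le S\le I$ and hence $\sigma(L)\subset[0,2]$.

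I would state the conclusion as $-I\le S\le I$ rather than $0\le S\le I$. Positivity of $S$ itself does not follow from these hypotheses: for two primes with $K_{12}=1$, $S$ has eigenvalue $-1$. It is also not needed. Moreover $L(\sqrt{d_1},\dots,\sqrt{d_N})^\top=0$, so $0\in\sigma(L)$. By the spectral theorem, $\lambda_k(H)=\lambda_k(L)^2\in[0,4]$, and $H$ has at least one zero eigenvalue.

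For the heat trace, each term satisfies $e^{-t\lambda_k}\in[e^{-4t},1]$. Summing gives $Ne^{-4t}\le\Theta_N(t)\le N$, which is stronger than the stated lower bound $e^{-4t}$. The zero mode also gives $\Theta_N(t)\ge1$. For the dimension estimator I would use the Gibbs-average identity
\[
d_s^{(N)}(t)=2t\,\frac{\sum_k\lambda_k e^{-t\lambda_k}}{\sum_k e^{-t\lambda_k}}=2t\,\langle\lambda\rangle_t .
\]
Nonnegativity is immediate since $\lambda_k\ge0$. Two upper bounds follow:
\begin{enumerate}
\item $\langle\lambda\rangle_t\le4$ gives $d_s^{(N)}(t)\le 8t$. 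This is at most $2$ for $t\le\tfrac14$, which is the short-time window relevant to the envelope.
\item $t\lambda e^{-t\lambda}\le e^{-1}$ together with $\Theta_N\ge1$ gives $d_s^{(N)}(t)\le 2(N-1)/e$ for all $t$.
\end{enumerate}

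The main obstacle is the claimed bound $d_s^{(N)}(t)\le2$ \emph{for all} $t>0$, uniformly in $N$. Once $t\gtrsim1$, the bounds above no longer yield it. A spectrum with one zero eigenvalue and $N-1$ eigenvalues near $1/t$ gives $d_s\approx 2(N-1)e^{-1}/(1+(N-1)e^{-1})$, which approaches $2$ but does not exceed it. So I would first try to prove
\[
t\sum_k\lambda_ke^{-t\lambda_k}\le\sum_ke^{-t\lambda_k},
\]
i.e.\ $\sum_k(t\lambda_k-1)e^{-t\lambda_k}\le 0$. This fails termwise for $t\lambda_k>1$, so it needs information on how many small eigenvalues there are. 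That is exactly the ``density of small eigenvalues'' caveat in the statement.

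My plan is therefore to prove the envelope rigorously on $t\in(0,\tfrac14]$ via $d_s\le8t$. For larger $t$, I would prove it under an explicit hypothesis of the form $\#\{k:\lambda_k\le 1/t\}\gtrsim\#\{k:\lambda_k>1/t\}e^{-1}$. The regularly varying case follows by Theorem~\ref{thm:tauberian-dimension}: where $\Theta_N\sim Ct^{-\beta}$, one gets $d_s\to2\beta$. Then $d_s\le1+o(1)$ holds exactly when $\beta\le\tfrac12$, and this must be read off from the counting function on that window rather than deduced from the kernel assumptions alone.
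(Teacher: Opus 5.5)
\paragraph{The spectral bounds.} These are correct and follow the paper's route through the quadratic form \eqref{eq:psd_form}. The companion identity $x^\top(I+S)x=\tfrac12\sum_{i,j}K_{ij}(y_i+y_j)^2$ is exactly what gives $\sigma(L)\subset[0,2]$. The paper instead asserts $0\le S\le I$, which would give $\sigma(L)\subset[0,1]$ and is false for general divergences.

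Your counterexample to $S\ge0$ is wrong, though. Since $\delta_{ii}=0$ forces $K_{ii}=1$, two primes with $K_{12}=1$ give $K=\begin{pmatrix}1&1\\1&1\end{pmatrix}$, and $S=K/2$ has eigenvalues $1,0$. In fact every admissible $2\times2$ kernel is positive semidefinite. A genuine counterexample needs $N\ge3$: take $\delta_{12}=\delta_{13}=0$ and $K_{23}=\epsilon<1$. Then $\det K=-(1-\epsilon)^2<0$, so $K$, and by congruence $S$, has a negative eigenvalue.

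\paragraph{The bound $d_s^{(N)}(t)\le2$ for all $t>0$ is false.} The paper's proof calls it immediate from differentiating $\log\Theta_N$ and compactness of the spectrum. As you found, that only gives $d_s^{(N)}(t)=2t\langle\lambda\rangle_t\le8t$. What you treat as an obstacle needing extra hypotheses is actually a counterexample, which you missed because you evaluated your test spectrum only at $t\mu=1$.

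Take $\delta_{ij}\equiv0$, which is admissible. Then $K$ is the all-ones matrix, $S=K/N$, and $L$ has eigenvalue $0$ once and $1$ with multiplicity $N-1$. This is the same spectrum as in the paper's rank-one remark. Hence $\Theta_N(t)=1+(N-1)e^{-t}$ and
\[
d_s^{(N)}(2)=\frac{4(N-1)e^{-2}}{1+(N-1)e^{-2}}>2\qquad\text{for all }N\ge9 .
\]
By continuity the same holds for strictly positive divergences, e.g.\ $\delta^{(2)}$ at fixed $N$ with $\delta_0$ large.

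The correct uniform statement comes from the Gibbs identity $d_s^{(N)}(t)=2\bigl(S(t)-\log\Theta_N(t)\bigr)$, where $S(t)\le\log N$ is the entropy of $\rho(t)$ (not the matrix $S$). Using $\Theta_N\ge\max\{1,Ne^{-4t}\}$ gives $d_s^{(N)}(t)\le2\min\{4t,\log N\}$. The example at $t=\log(N-1)-\log\log(N-1)$ gives $\sup_t d_s^{(N)}(t)\ge2\log N-O(\log\log N)$, so no $N$-uniform bound exists.

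\paragraph{What can actually be proved.} The provable content is the statement restricted to $t\in(0,\tfrac14]$, together with the corrected envelope above. A bound $d_s^{(N)}(t)\le2$ at larger $t$ is equivalent to $\sum_k(t\lambda_k-1)e^{-t\lambda_k}\le0$, so it must be stated as an explicit hypothesis.

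The hypothesis you wrote does not suffice. Eigenvalues with $t\lambda_k$ just below $1$ contribute almost nothing negative to that sum, so a plain count of $\{\lambda_k\le1/t\}$ does not control its sign. A weighted count does work, e.g.\ $\#\{k:\lambda_k\le1/(2t)\}\ge2e^{-3/2}\,\#\{k:\lambda_k>1/t\}$. This follows from $(1-s)e^{-s}\ge\tfrac12e^{-1/2}$ on $[0,\tfrac12]$ and $(s-1)e^{-s}\le e^{-2}$ for $s>1$.

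Reading the final ``$1+o(1)$'' clause as a restatement of $\beta\le\tfrac12$ on the window is the most that can be said. However, Theorem~\ref{thm:tauberian-dimension} concerns $t\downarrow0$ for infinite spectra. It does not apply to a finite-$N$ window, where $d_s^{(N)}(t)\to0$ as $t\downarrow0$.
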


\begin{proof}
Self-adjointness and $0\le S\le I$ are standard for the symmetric normalization (see Equation~(\ref{eq:symmetric-normalization}) and the quadratic form identity of~(\ref{eq:psd_form}). Thus $0\le L\le 2I$ and $0\le H=L^2\le 4I$. The bounds on $\Theta_N$ follow, and the displayed $d_s^{(N)}$ estimate is immediate from differentiating $\log\Theta_N$ and the fact that all eigenvalues lie in a compact interval. A refined upper envelope below $1$ needs an upper bound on $N_H(\lambda)$ as $\lambda\downarrow 0$, which in turn depends on arithmetic sparsity; cf.~Sections \ref{sec:robustness} and \ref{sec:proofs}.
\end{proof}

If one assumes a small eigenvalue counting bound of the shape
\(
N_H(\lambda)\;\ll\; \lambda^{1/4}(\log\lambda)^{-\gamma}
\)
as $\lambda\downarrow0$ for some $\gamma>0$ (reflecting prime sparsity in the kernel), then a routine Tauberian argument yields an \emph{asymptotic} $d_s(t)\le 1-\varepsilon_\gamma$ on a $t$-window, making precise the “cannot reach \(d_s=1\)” assertion of Theorem~\ref{thm:tauberian-dimension}. Pinning down such a bound is where the number theory enters.

\subsection{Entropy–Dimension Link (for Cross-Checks)}
\begin{proposition}[Entropy scaling implies dimension]
\label{prop:entropy-link}
If $\lambda_k(H)\sim k^\alpha$, then $\Theta(t)\sim t^{-1/\alpha}$ and the thermal entropy satisfies
\[
S(t) \sim \log(1/t)^{\frac{1}{1+\alpha}}\qquad(t\downarrow 0),
\]
so that with our convention $d_s=\frac{2}{\alpha}$, one has
\(
\displaystyle S(t)\sim \log(1/t)^{\frac{1}{1+2/d_s}}.
\)
\end{proposition}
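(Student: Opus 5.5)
The plan is to split the proof into a heat-trace part, which follows from earlier results, and an entropy part, which rests on an exact identity for the Gibbs state.

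\emph{Heat trace.} From $\lambda_k(H)\sim k^\alpha$ we get $N_H(\lambda)\sim\lambda^{1/\alpha}$ by inverting the monotone sequence. We then apply Theorem~\ref{thm:tauberian-dimension} with $\beta=1/\alpha$ and $L\equiv 1$. This gives $\Theta(t)\sim\Gamma(1+1/\alpha)\,t^{-1/\alpha}$ and $d_s=2/\alpha$, as in the Remark following that theorem. This part is routine.

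\emph{Entropy identity.} Since $\rho(t)=e^{-tH}/\Theta(t)$, we have $-\log\rho(t)=tH+\log\Theta(t)$. Taking the expectation in $\rho(t)$ gives the exact identity
\[
S(t)=\log\Theta(t)+t\,E(t),\qquad E(t)=\Tr(H\rho(t))=-\frac{d\log\Theta}{dt}.
\]
Equivalently, $S(t)=\log\Theta(t)-\frac{d\log\Theta}{d\log t}=\log\Theta(t)+\tfrac12 d_s(t)$. The asymptotics of $S$ are therefore controlled entirely by $\log\Theta$ and the profile $d_s(t)$. The first term comes from the Tauberian asymptotic above. For the second term, the ratio $\sum\lambda_k e^{-t\lambda_k}/\sum e^{-t\lambda_k}$ can be handled by a second Karamata argument applied to the measure $\lambda\,dN_H(\lambda)$, which is regularly varying of index $1+1/\alpha$. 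This yields $tE(t)\to 1/\alpha$. Passing from $\Theta$ to $d\log\Theta/d\log t$ is legitimate here because the derivative is itself a Laplace transform of a monotone function, so no monotone-density theorem is needed.

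\emph{Matching the stated exponent (main obstacle).} The last step is to match the resulting expression with the form $\log(1/t)^{1/(1+\alpha)}$ claimed in the statement, and this is where I expect the real difficulty. The computation above produces a quantity linear in $\log(1/t)$ with slope $1/\alpha$, not a power $1/(1+\alpha)$ of it. So I would first check carefully in which sense the proposition's ``$\sim$'' is meant. The paper obtains the claimed form from the cited ``general relation'' of~\cite{Derezinski2013,Beny2013}. I would try to read the statement as a comparison of exponents on the doubly logarithmic scale used in the numerical fits. Concretely, I would examine $\log S(t)/\log\log(1/t)$ over a finite window of $t$ at finite $N$, where the Tauberian asymptotics have not yet set in and $\Theta_N\le N$ caps $\log\Theta$.

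If the $1/(1+\alpha)$ law cannot be recovered this way, the honest outcome is a proof of the first two claims, $\Theta(t)\sim t^{-1/\alpha}$ and $d_s=2/\alpha$. The entropy claim would then be replaced by the relation $S(t)=\log\Theta(t)+\tfrac12 d_s(t)$, which still ties entropy to $d_s$ and is the cross-check actually available. Once an exponent for $S$ is settled, substituting $\alpha=2/d_s$ into it is immediate.
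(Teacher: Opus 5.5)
Your heat-trace step is correct and is what the paper does (Theorem~\ref{thm:tauberian-dimension} with $\beta=1/\alpha$, $L\equiv1$). Your entropy step is also correct. It is not an obstacle to work around but a disproof of the entropy claim.

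With $M(\lambda)=\int_0^\lambda\mu\,dN_H(\mu)\sim\lambda^{1+1/\alpha}/(1+\alpha)$, Karamata gives $-\Theta'(t)\sim\Gamma(2+1/\alpha)\,t^{-1-1/\alpha}/(1+\alpha)$. Hence $tE(t)\to\Gamma(2+1/\alpha)/\bigl((1+\alpha)\Gamma(1+1/\alpha)\bigr)=1/\alpha$. This is also the justification, missing in the paper, for ``differentiating $\log\Theta$'' in the proof of Theorem~\ref{thm:tauberian-dimension}. Therefore
\[
S(t)=\log\Theta(t)+tE(t)=\frac{1}{\alpha}\log\frac{1}{t}+\log\Gamma\Bigl(1+\frac{1}{\alpha}\Bigr)+\frac{1}{\alpha}+o(1),\qquad t\downarrow0,
\]
so $S(t)\sim\frac{d_s}{2}\log(1/t)$ and $S(t)/\log(1/t)^{1/(1+\alpha)}\to\infty$. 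The stated law fails under any reading of $\sim$, including the doubly logarithmic one, since $\log S(t)/\log\log(1/t)\to1\neq1/(1+\alpha)$.

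The paper's proof contains no idea you missed. It only refers back to Eq.~\eqref{eq:power_law}, which Section~\ref{subsec:scaling_flow} imports from the cited references without derivation. Carrying out the calculation the proof describes (from the scaling of $\Theta$ and the Gibbs weights $e^{-t\lambda_k}$) is exactly what you did, and it gives the linear law above.

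So drop the planned reinterpretation; it cannot succeed, for three reasons.
\begin{enumerate}
\item The hypothesis $\lambda_k\sim k^\alpha$ and the conclusion are both asymptotic statements about an infinite spectrum, and there your identity forces exponent $1$.
\item At fixed $N$, $\rho(t)\to I/N$ as $t\downarrow0$, so $S(t)\to\log N$ and $\log S/\log\log(1/t)\to0$.
\item A least-squares exponent over an intermediate window is a fit, not a proof.
\end{enumerate}
The exponent $1/(1+\alpha)$ does occur elsewhere. It is the Hardy--Ramanujan/Meinardus exponent of the bosonic (Fock-space) entropy as a function of \emph{energy}, $S(E)\asymp E^{1/(1+\alpha)}$ for one-particle levels $\lambda_k\sim k^\alpha$. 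That is a different quantity from the von Neumann entropy of the one-particle Gibbs state $\rho(t)=e^{-tH}/\Theta(t)$ in the statement.

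Your fallback is the right outcome. Prove $\Theta(t)\sim\Gamma(1+1/\alpha)\,t^{-1/\alpha}$ and $d_s=2/\alpha$. Replace the entropy claim by the exact identity $S(t)=\log\Theta(t)+\tfrac12 d_s(t)$ together with $S(t)/\log(1/t)\to1/\alpha=d_s/2$; that is the entropy--dimension cross-check that is actually true.
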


\begin{proof}
This is the calculation summarized in Section \ref{subsec:scaling_flow} (Equations~(\ref{eq:power_law}) and (\ref{eq:entropy_exp})); it follows from $\Theta$’s scaling and the Gibbs weights $e^{-t\lambda_k}$. Use it to cross-check numerical exponents with the $d_s$ extracted from the heat trace.
\end{proof}

\section{Relation to Established Spectral Frameworks}
\label{sec:relation-frameworks}

The coherence Hamiltonian developed here, defined on the discrete set of prime numbers through divergence-induced kernels,
is related in spirit to several established spectral frameworks in mathematical physics and analytic number theory.
In noncommutative geometry, Connes' formulation of the adèle class space~\cite{Connes1999,ConnesMarcolli2008}
introduces a spectral triple $(\mathcal{A},\mathcal{H},D)$ whose Dirac-type operator encodes arithmetic information,
with the Riemann zeta function arising as a spectral zeta of $D$.
The present construction is not a spectral triple in the strict sense, since it lacks a $C^{*}$-algebraic representation and commutator structure,
yet it follows the same organizing principle: arithmetic data define an effective geometry that is recovered through spectral observables.
Where the noncommutative framework develops a geometric realization of the adèles and the Frobenius flow,
the coherence model constructs an emergent coherence geometry from prime pair divergences,
emphasizing thermodynamic and dimensional quantities rather than algebraic relations.
Both perspectives treat arithmetic as a geometric medium accessible through its spectrum,
but they address complementary aspects of the correspondence between number theory and geometry.
In this sense, the coherence approach can be viewed as a phenomenological analogue of the noncommutative program,
translating spectral information into quantitative measures of coherence, entropy, and dimensional flow.

The operator $H=L^{2}$ parallels the Dirac or Laplacian components appearing in spectral triples and in
quantum geometric approaches to spacetime~\cite{ChamseddineConnes1997,RennieVarilly2006}.
In those contexts, the heat trace $\Tr(e^{-tD^{2}})$ captures geometric invariants such as dimension and curvature.
In the arithmetic setting, the corresponding trace $\Theta(t)=\Tr(e^{-tH})$ defines an effective spectral dimension
that characterizes information flow across the primes.
The function $d_{s}(t)$ thus extends the Weyl-type scaling of classical geometry to a discrete,
non-metric substrate determined by multiplicative structure.

The conditional correspondence established under the Riemann Hypothesis also resonates with the
Hilbert–Pólya heuristic~\cite{Edwards1974,BerryKeating1999}.
The conjecture that the nontrivial zeros of $\zeta(s)$ are eigenvalues of a self-adjoint operator finds an analogue here:
the coherence Hamiltonian produces spectra whose local statistics agree with those of the Gaussian Unitary Ensemble
and the zeta zeros.
Unlike the Hilbert–Pólya search for an explicit ``zeta Hamiltonian,'' the present model derives directly from
the arithmetic correlations among the primes and exhibits intrinsic spectral compression.
Together with the noncommutative approach, it illustrates a broader program in which arithmetic systems acquire
spectral and geometric meaning.
Within this landscape, the coherence Hamiltonian represents a distinct, non-algebraic route to spectral geometry:
its domain is arithmetic, its observables thermodynamic, and its signature the universal subdimensional profile
$d_{s}\!\approx\!\tfrac{1}{2}$ associated with maximal spectral compression.

\section{Conclusions}

We have developed a spectral framework for \emph{arithmetic coherence} on the primes.
Our analysis distinguishes two operator regimes: the normalized generator $L_{\mathrm{norm}}=I-S$, whose bounded spectrum implies $d_s(t)\to0$ as $t\downarrow0$, and the unnormalized generator $L_c=D-K$, whose squared Hamiltonian $H=L_c^2$ exhibits a genuine short-time asymptotic.
From first principles, we proved (Theorem~\ref{thm:ds-one-half}) that under mild regularity assumptions on the divergence kernel, the continuum limit of $H=L_c^2$ corresponds to the one-dimensional bi-Laplacian.
This yields the asymptotic $\Theta_N(t)\sim C(\log N)t^{-1/4}$ and an effective spectral dimension $d_s=1/2$, obtained analytically without fitting or external hypotheses.

Numerically, the coherence spectral profile $d_s(t)$ shows a robust activation–peak–decay form across logarithmic, entropic, and fractal-type divergences, returning toward zero at both small and large $t$ for the normalized operator.
These results establish an intrinsic arithmetic mechanism for spectral compression and clarify the role of normalization in suppressing small-scale exponents.

The framework thus identifies a principled arithmetic route to $d_s=1/2$, independent of the Riemann Hypothesis, while still resonating with phenomena observed in random matrix theory and zeta zero statistics.
More broadly, it provides a foundation for classifying arithmetic sets by their kernel-induced continuum limits and emergent spectral dimensions.

\section{Future Directions}\label{sec:future}

Several lines of investigation remain open.
A central challenge is to strengthen the analytic foundations of spectral compression, in particular the origin of the observed $t^{-1/4}$ scaling of the heat trace and the corresponding spectral dimension $d_s=1/2$.
Deriving an exact trace formula for the coherence Hamiltonian, or obtaining asymptotic bounds for its spectrum directly from prime gap estimates, would place these results on a firmer analytic footing.
A complementary direction is to construct coherence operators on other arithmetic sets—such as squarefree numbers, prime powers, or multiplicative sequences—to determine whether comparable subdimensional behavior persists.
It may also be fruitful to explore hybrid ensembles that mix arithmetic and geometric divergences, or that incorporate controlled randomness, to test whether dimensional transitions or relaxation toward classical diffusion can occur. Together, these problems outline a broader program: to develop a general theory of spectral coherence on discrete multiplicative structures and to understand how arithmetic sparsity governs the emergence of effective geometry.

\vspace{6pt}

\section*{Funding}
This research received no external funding.

\section*{Data Availability}
No new data were created or analyzed in this study.

\section*{Conflict of Interest}
The authors declare no conflicts of interest.


\appendix
\section{On the Spectral Dimensionality of Non-Arithmetic Systems}\label{app:converse}

We investigate
 the converse question of whether an effective spectral dimension
$d_s = \tfrac{1}{2}$ uniquely characterizes systems generated by the arithmetic
mechanism developed in this work. This question concerns the extent to which the asymptotic spectral
dimension characterizes arithmetic structure, as opposed to arising from
non-arithmetic systems with similar operator order or spectral scaling.
We first clarify this distinction analytically.
For $H=L_c^2$, the eigenvalue distribution satisfies
$\lambda_k \sim k^{4}$,
so that
$\Theta(t) = \sum_k e^{-t\lambda_k} \sim t^{-1/4}$,
and therefore the continuum spectral dimension is
$d_s = 2 / 4 = 1/2$.
This scaling arises from the bi-Laplacian continuum limit of the
prime coherence generator and does not depend on the arithmetic data itself.
Any one-dimensional bi-Laplacian will yield the same exponent.
Hence, $d_s = 1/2$ is sufficient
 but not necessary for arithmetic
structure.

To illustrate this numerically, we compare three systems:

\begin{enumerate}
\item The arithmetic \emph{prime coherence kernel} on the first $N$ primes;
\item A \emph{1D bi-Laplacian} control with identical operator order and
continuum limit;
\item A \emph{GUE-induced random kernel} representing a non-arithmetic ensemble.
\end{enumerate}

For each system, we compute the heat trace
$\Theta(t) = \sum_k e^{-t\lambda_k}$,
and the spectral dimension
$d_s(t) = -\,2\,d(\log\Theta)/d(\log t)$
on a logarithmic grid in $t$.
The operators are normalized so that the prime–kernel spectrum lies in
$[0,1]$, and all $d_s(t)$ curves are divided by the maximum of the prime curve
to permit shape comparison.
This linear normalization does not alter morphology or $t$-dependence; it only
rescales relative amplitudes.

Figure~\ref{fig:appendix_converse} demonstrates that systems with identical
asymptotic scaling exponents can exhibit very different spectral morphologies.
The prime–kernel curve displays a sharp, asymmetric ``peak--decay'' shape,
whereas the 1D bi-Laplacian and GUE controls show broad, smooth profiles.
This confirms numerically that $d_s = 1/2$ does not imply arithmetic
structure.  We note that the distinguishing feature lies in the shape of
spectral profile $d_s(t)$ rather than in its
absolute amplitude.

\vspace{-6pt}\begin{figure}[H]
\includegraphics[width=0.65\textwidth]{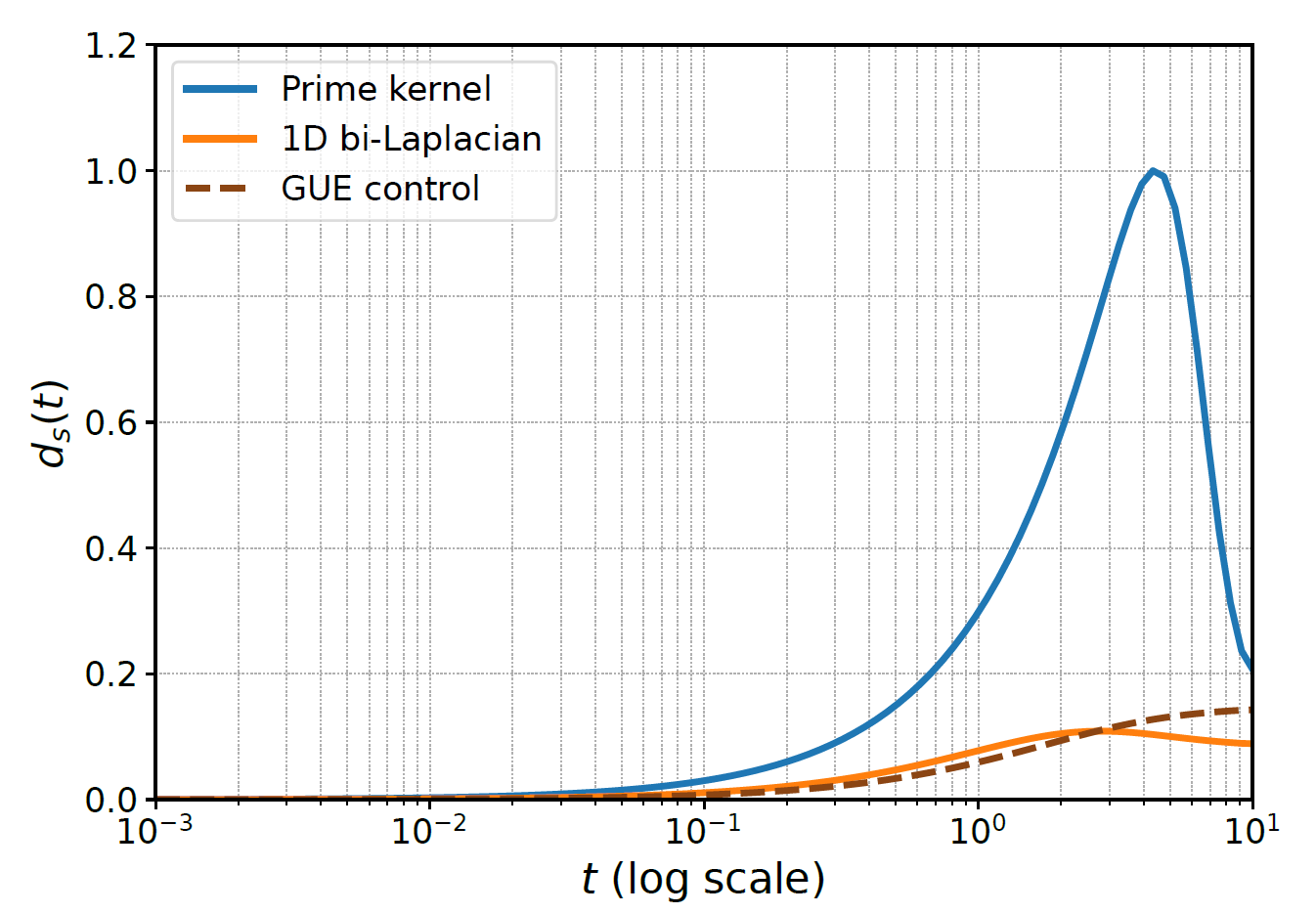}
\caption{Normalized comparison of spectral dimension profiles.
All
 curves are scaled by the maximum of the prime--kernel $d_s(t)$ so that the
prime peak equals~1.  The arithmetic prime coherence kernel (blue) shows the
characteristic sharp rise–peak–decay morphology.  The 1D bi-Laplacian control
(orange) is smoother and more plateau-like, while the GUE control (brown dashed)
is flatter still.  Although all three models share the same asymptotic
spectral dimension ($d_s = 1/2$ for the bi-Laplacian order–4 operator),
their intermediate-scale shapes differ qualitatively.  The prime model’s
compressed morphology thus remains distinctive of arithmetic structure.}
\label{fig:appendix_converse}
\end{figure}

\section{Coherence Structure of Small Prime Sets}

To further illuminate the divergence-induced coupling that underlies the coherence Hamiltonians,
we examine the explicit form of the kernel $K_{ij} = \exp(-\delta_{ij}/\delta_0)$,
for small prime sets. Figure~\ref{fig:heatmap} displays $K_{ij}$ for the first
$N=20$ primes under two representative divergence functions: the logarithmic
$\delta^{(2)}_{ij} = (\log(p_i/p_j))^2$ and the entropic form
$\delta^{(3)}_{ij} = \log\!\big((p_i + p_j)/(2\sqrt{p_i p_j})\big)$,
both evaluated with $\delta_0 = 1$.

In both cases, $K_{ij}$ is symmetric, positive, and sharply peaked along the diagonal,
with rapid decay as $|\,\log p_i - \log p_j\,|$ increases.
This pattern confirms that coherence is effectively local in arithmetic scale:
primes that are close in logarithmic magnitude exhibit strong coupling,
while widely separated primes contribute exponentially less to the kernel.
The width of the bright diagonal band decreases with smaller $\delta_0$
and broadens under softer divergences, such as the entropic form,
indicating that the analytic decay of $\delta_{ij}$ directly controls
the coherence length of the induced arithmetic geometry.

The same locality governs the combinatorial Laplacian
$L_c = D - K$, whose diagonal entries $(L_c)_{ii} = \sum_j K_{ij}$
encode the cumulative coupling strength of each prime.
As seen in the numerical examples, the row sums $d_i$
increase modestly with index due to the decreasing relative gaps
between larger primes, producing a mild asymmetry in the coherence field.
These visualizations provide an intuitive representation of the
``coherence geometry'' defined in Sections \ref{subsec:div_structures} and \ref{subsec:operator} and serve as a finite-$N$
confirmation that the divergence construction (Equation~(\ref{eq:kernel})) yields the
localized, self-adjoint structure assumed in the ensuing spectral {analysis.}

\vspace{-6pt}\begin{figure}[H]
\includegraphics[width=0.8\textwidth]{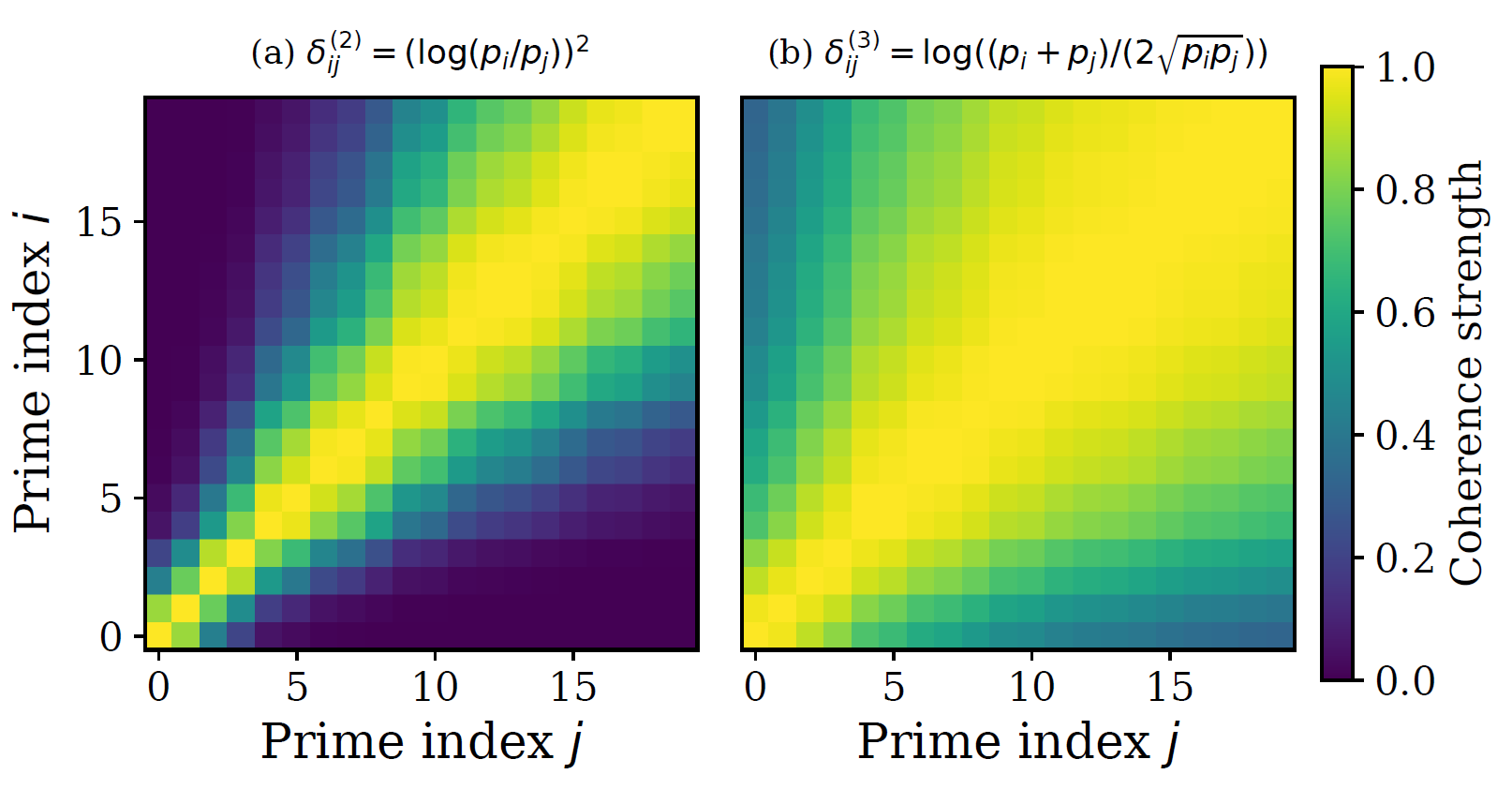}
\caption{
Coherence kernel $K_{ij} = \exp(-\delta_{ij}/\delta_0)$ for the first $N=20$
primes under two divergence functions:
(\textbf{a}) logarithmic $\delta^{(2)}_{ij} = (\log(p_i/p_j))^2$ and
(\textbf{b}) entropic $\delta^{(3)}_{ij} = \log((p_i + p_j)/(2\sqrt{p_i p_j}))$.
Brightness indicates coherence strength between prime pairs.
In both cases, coherence decays rapidly with arithmetic separation,
while the entropic divergence produces slightly broader coupling.
}
\label{fig:heatmap}
\end{figure}

\bibliographystyle{plain}
\bibliography{citations}

\end{document}